\newcommand{\mC}{\mathcal{C}}
\newcommand{\mU}{\mathcal{U}}
\newcommand{\mL}{\mathcal{L}}
\newcommand{\R}{\mathbb{R}}
\newcommand{\mbE}{\mathbb{E}}
\newcommand{\mbP}{\mathbb{P}}
\newcommand{\mO}{\mathcal{O}}
\newcommand{\dd}{\mathrm{d}}
\newcommand{\dX}{\mathrm{d}X}
\newcommand{\dt}{\mathrm{d}t}
\newcommand{\dW}{\mathrm{d}W}
\newcommand{\innerp}[1]{\langle #1 \rangle}
\newcommand{\prob}[1]{\mathbb{P}\left( #1 \right)}
\renewcommand{\top}{\mathsf{T}}
\newtheorem{thm}{Theorem}
\newtheorem{corollary}{Corollary}
\newtheorem{assumption}{Assumption}
\newtheorem{problem}{Problem}
\title{\LARGE \bf
Trajectory Optimization of Stochastic Systems under Chance Constraints via Set Erosion 
}
\author{Zishun Liu$^*$, Liqian Ma$^*$ and Yongxin Chen
\thanks{$^{*}$The contributions of the first two authors are equal.}
\thanks{The authors are with Georgia Institute of Technology, Atlanta, GA 30332. 
        {\tt\small \{zliu910\}\{mlq\}\{yongchen\}@gatech.edu}}%
}
\begin{document}
\maketitle
\thispagestyle{empty}
\pagestyle{empty}

\begin{abstract}
We study the trajectory optimization problem under chance constraints for continuous-time stochastic systems. To address chance constraints imposed on the entire stochastic trajectory, we propose a framework based on the set erosion strategy, which converts the chance constraints into safety constraints on an eroded subset of the safe set along the corresponding deterministic trajectory. The depth of erosion is captured by the probabilistic bound on the distance between the stochastic trajectory and its deterministic counterpart, for which we utilize a novel and sharp probabilistic bound developed recently. By adopting this framework, a deterministic control input sequence can be obtained, whose feasibility and performance are demonstrated through theoretical analysis. Our framework is compatible with various deterministic optimal control techniques, offering great flexibility and computational efficiency in a wide range of scenarios. To the best of our knowledge, our method provides the first scalable trajectory optimization scheme for high-dimensional stochastic systems under trajectory level chance constraints. We validate the proposed method through two numerical experiments. 
\end{abstract}

\section{Introduction}
\label{sec:introduction}
Safety is a basic requirement for a wide range of real-world dynamical systems, including autonomous vehicles, manipulators, drones, and more \cite{singletary2021safety}. Typically, a trajectory is considered safe if it stays in the safe region over a given time horizon. For deterministic systems under bounded disturbances, a variety of deterministic approaches such as dynamical programming \cite{SB-MC-SH-CJT:17}, barrier functions \cite{prajna2007framework,ames2019control}, and model predictive control \cite{mesbah2016stochastic} have been proposed to provide safety guarantees under the worst case. However, when stochastic disturbances are introduced, deterministic approaches often become inapplicable or overly conservative, as the stochastic disturbance can be unbounded or rarely realize the worst case. To better capture the statistical behavior of stochastic trajectories, the safety requirement is instead expressed as a chance constraint, which enforces a lower bound on the probability that the trajectory remains within the safe region over the entire time horizon.

Two important research directions related to chance constraints are safety verification and controller synthesis. 
Safety verification aims to determine whether a given stochastic system satisfies the chance constraint. Effective methods, including reachability analysis \cite{PM-DC-JL:16}, set-erosion strategy \cite{ma2025safety,liu2024safety}, and martingale-based methods \cite{santoyo2021barrier,jagtap2020formal}, have been well developed to enable non-conservative verification of chance constraints. However, many physical systems are not naturally safe in real-world applications. For such systems, it becomes essential to synthesize controllers that ensure safety under chance constraints throughout the trajectory.

The objective of controller synthesis varies across different control tasks. In this paper, we investigate the stochastic trajectory optimization problem under chance constraints, which aims to find control input sequences that both optimize the stochastic trajectory and ensure satisfaction of the chance constraints.
When the system is linear and subject to Gaussian disturbances, this problem can be addressed by controlling the mean and covariance of the trajectory \cite{5970128,yu2025stochastic}, reducing it to a deterministic optimal control problem with linear constraints. However, these methods fail to guarantee satisfaction of the chance constraint over the entire trajectory in continuous-time settings, and they do not scale well to nonlinear systems. For nonlinear stochastic systems, a variety of approaches have been developed to tackle the trajectory optimization problem under chance constraints.
For instance, dynamic programming-based methods \cite{tassa2014control,9993003} leverage dynamic programming to generate control inputs for the stochastic trajectories, but such methods are not applicable to high-dimensional systems. A series of work \cite{nakka2019trajectory,nakka2022trajectory} approximates the continuous-time stochastic system with a deterministic ODE, and develops a deterministic surrogate of the original stochastic problem. However, in this approach, chance constraints are imposed at individual time points, not at the trajectory level. 
Moreover, these schemes become overly conservative when the chance constraints have a high probability level, e.g., $\prob{\mbox{entire trajectory is safe}}\geq 99.99\%$. Other existing methods, such as model predictive control \cite{kohler2024predictive} and Monte-Carlo motion planning \cite{janson2017monte}, are also applied in practice. However, all of these methods either fail to enforce the chance constraint over the entire trajectory or are not scalable to high dimensional problems.

In this paper, we present a novel framework for solving the continuous-time stochastic trajectory optimization problem under chance constraints. Our method is based on a strategy termed set erosion, which converts the chance constraint on the safe set to a safety constraint of a deterministic trajectory on an eroded subset. The degree of erosion is quantified by the probabilistic bound on the gap between the stochastic trajectory and its deterministic counterpart, for which we adopt a tight probabilistic bound established in our recent work \cite{liu2025safetyPT}. By utilizing this probabilistic bound and the set-erosion strategy, our framework reduces the stochastic trajectory optimization problem to a deterministic one, whose feasibility is insensitive to the specified probability level of the chance constraint, and its feasible solutions are proved to be feasible for the original stochastic problem. The performance of the proposed framework is also discussed under proper assumptions.
Our method is applicable to continuous-time nonlinear stochastic systems and can be readily integrated with a variety of deterministic optimal control methods, offering substantial flexibility and scalability in solving the stochastic trajectory optimization problems.

\textit{Notations.} The set of non-negative real numbers is denoted by $\mathbb{R}_{\geq0}$. We use $\|\cdot\|$ to denote $\ell_2$ norm. Given two sets $A,B\subseteq \R^n$, the Minkowski sum of them is defined by $A\oplus B$, and the Minkowski difference is defined by $A\ominus B$. We use $\mbE$ to denote expectation, $\mbP$ to denote probability, $\mathcal{N}(\mu,\Sigma)$ to denote Gaussian distribution, and $\mathcal{B}^n(r,y)$ to denote the ball $\{x\in\R^n: \|x-y\|\leq r\}$. Given a continuously differentiable vector-valued function $f:\R^n\to \R^m$, we denote the Jacobian of $f$ at $x$ by $D_xf(x)$. For a differentiable scalar-valued function $f:\R^n\to \R$, its gradient at $x$ is denoted by $\nabla f(x)$.

\section{Stochastic Safe Trajectory Optimization} \label{sec: problem}

We study the following continuous-time stochastic system
\begin{equation}\label{sys: c-t ss}
     \dX_{t}=f(X_t,u_t,t)\dt+g_t(X_t)\dW_t,
\end{equation}
where $X_t\in \R^n$ is the system state, $u_t\in\mU$ is an open-loop control input chosen from a bounded set $\mU\subset\R^p$, $g_t(X_t)\in\R^{n\times m}$ is the diffusion term, $W_t\in\R^m$ is the $m$-dimensional Wiener process (Brownian motion) modeling the stochastic disturbance, and $f: \R^n\times\R^p\times\R_{\geq0}\to\R^n$ is a smooth transition function. We assume standard Lipschitz and linear growth conditions \cite[Theorem 5.2.1]{BO:13} to ensure \eqref{sys: c-t ss} has a solution. These assumptions are widely accepted in both scientific studies and engineering.

This paper aims at solving the trajectory optimization problem for the stochastic system \eqref{sys: c-t ds} under safety constraints. 
To formulate this problem, we start with the safety of deterministic trajectories.
Consider the deterministic system
\begin{equation}\label{sys: c-t ds}
    \dot{x}_{t}=f(x_t,u_t,t),
\end{equation}
which can be treated as the noise-free version of the stochastic system \eqref{sys: c-t ss}. Given a terminal time $T$, an initial state $x_0$ and a safe set $\mC\subseteq\R^n$, a deterministic trajectory of \eqref{sys: c-t ds} starting from $x_0$ is \textit{safe} on $[0,T]$ if $x_0\in\mC$ and there exists a control input curve $u_t:[0,T]\to\mU$ such that $x_t\in\mC$ holds for any $t\in[0,T]$.

For deterministic trajectory optimization, the deterministic safety condition can be added to the constraints to ensure trajectory safety. 
 However, when considering the stochastic system \eqref{sys: c-t ss}, the concept of deterministic safety is restrictive since the state $X_t$ is unbounded. In this scenario, we shift our focus to chance constraints to better capture the effect of stochastic noise. Given a $\delta\in[0,1]$, a safe set $\mC\subset\mathbb{R}^n$, an initial state $X_0$ and a terminal time $T$, if $X_0\in\mC$ and there exists a control input curve $u_t: [0,T]\to\mU$ such that:
    \begin{equation} \label{eq: sto safety}
        \prob{X_t\in\mC,~ \forall t\leq T}\geq 1-\delta,
    \end{equation}
    then we say the trajectory $\{X_t:t\in[0,T]\}$ controlled by $u_t$ satisfies \textit{ the chance constraint with probability level of $1-\delta$}. $\delta$ is usually chosen as a small value (e.g., $\delta=10^{-4}$) for better safety satisfaction.

\smallskip

Given the cost function $\mathcal{L}_t(X_t,u_t)$, the terminal cost $\Phi_T(X_T)$ and the initial state $X_0\in\mC$, the stochastic trajectory optimization task with safety guarantee can be formalized as the following stochastic optimization problem with chance constraint \eqref{eq: sto safety} 
\begin{subequations}\label{eq: sto opt traj}
    \begin{align}
        \min_{\bm{X,u}} &J_s(\bm{X,u})= \mbE\left\{\int_{0}^{T} \mL_t(X_{t},u_{t})\dt + \Phi_T(X_T)\right\}\\
        \mbox{s.t.}\quad &\dX_{t}=f(X_t,u_t,t)\dt+g_t(X_t)\dW_t, ~ \mbox{given }X_0, \\
        &\prob{X_t\in\mC,~ \forall t\leq T}\geq 1-\delta, \\
        & u_t\in\mU,
    \end{align}
\end{subequations}
where the optimization variables $\{\bm{X,u}\}=\{(X_t,u_t):t\in[0,T]\}$ denote the state trajectory and the control input curve. Unlike optimal control for deterministic systems, it is very challenging to find a feasible stochastic trajectory for \eqref{eq: sto opt traj}. To make it tractable, existing studies have made progress in developing \textit{deterministic} methods for the trajectory optimization problem \eqref{eq: sto opt traj}, which means the control input $u_t$ is generated from a policy that has no randomness, e.g., \cite{nakka2019trajectory,nakka2022trajectory,9993003}. However, these works only guarantee trajectory-level chance constraints for discrete-time systems. As for continuous-time systems, the constraint $\prob{X_t\in\mC}\geq 1-\delta$ is only imposed at a single given time $t$, but cannot be extended to the entire trajectory over $t\in[0,T]$ since there are unaccountably infinite time steps. The goal of this paper is to solve the problem \eqref{eq: sto opt traj} while overcoming the limitations in these existing works, as formalized below.            

\begin{problem}\label{prob1}
    Consider the continuous-time stochastic system \eqref{sys: c-t ss}. Develop a deterministic framework to solve the stochastic trajectory optimization problem \eqref{eq: sto opt traj} with trajectory-level chance constraint (\ref{eq: sto opt traj}c). 
\end{problem}

\section{Trajectory Optimization Framework via Set Erosion} \label{sec: planning}

Given a stochastic trajectory $X_t$ of \eqref{sys: c-t ss}, define its \textit{associated} deterministic trajectory $x_t$ as the trajectory of \eqref{sys: c-t ds} that has the same initial state and control input as $X_t$ at any time. Since deterministic trajectory optimization has been well studied, it is a natural thought to develop the deterministic method on $x_t$ then apply the result to its associated $X_t$. In this section, we first introduce the set-erosion strategy built on the associated trajectories, then propose our trajectory optimization framework and analyze its feasibility and performance.

We follow \cite{liu2025safetyPT} and impose the boundedness assumptions on $g_t(x)$ and the matrix measure of $D_xf(x,u,t)$, which play an important role in characterizing the evolution of system trajectories.  
\begin{assumption}\label{as: boundness}
    For the CT stochastic system~\eqref{sys: c-t ss}, there exist $c\in\R$ and $\sigma>0$ such that,
    \begin{enumerate}
        \item $\mu(D_xf(x,u,t))\leq c$ for any $t\ge 0$, $u\in\mU$, and $x\in\R^n$.
        \item $g_t(x)g_t(x)^{\top}\preceq \sigma^2 I_n$ for any $t\geq0$ and $x\in\R^n$.
    \end{enumerate}
\end{assumption}
In particular, the system \eqref{sys: c-t ss} is said to be \textit{contractive} if Assumption \ref{as: boundness} holds with $c<0$.

\subsection{Set-Erosion Strategy and Probabilistic Tube} \label{subsec: erosion}

Intuitively, a stochastic trajectory fluctuates around its associated deterministic trajectory with high probability. Therefore, if we erode the safe set $\mC$ with a suitable depth $r_t$ to obtain an eroded subset $\tilde{\mC}_t=\mC\ominus\mathcal{B}^n(r_t,0)$, and control the deterministic $x_t$ to stay within $\tilde{\mC}_t$ at any time, then by applying the same $u_t$ to its associated stochastic trajectory, $X_t$ is expected to stay in $\mC$ with high probability. This strategy is termed \textit{set-erosion}. It has been considered in several existing works \cite{liu2024safety,liu2025safetyPT}, and is utilized in our trajectory optimization method.

The key to eroding the safe set $\mC$ is the appropriate erosion depth $r_{\delta,t}$, which is highly related to the probabilistic tube of the stochastic system. Given a finite time horizon $[0,\,T]$, a probability level $\delta\in(0,1)$ and a curve $r_{\delta,t}:[0,T]\to\R_{\geq0}$, the set $\mathcal{T}=\{(t,y)|0\le t\le T, \|y\|\le r_{\delta,t}\}$ is said to be a \textit{probabilistic tube} (PT) of the stochastic system \eqref{sys: c-t ss} if for any associated trajectories $X_t$ and $x_t$:
\begin{equation} \label{eq: def PT}
\begin{split}
    &\prob{(t,X_t-x_t)\in \mathcal{T}, ~\forall t\leq T}\\=~&\prob{\|X_t-x_t\|\leq r_{\delta,t},~\forall t\leq T}\geq 1-\delta,
\end{split}
\end{equation}     
where $r_{\delta,t}$ is said to be the \textit{radius} of PT. It is provable that when the erosion depth is chosen as the radius of PT, the safety of $x_t$ on $\mC\ominus\mathcal{B}^n(r_{\delta,t},0)$ yields the satisfaction of the chance constraint \eqref{eq: sto safety} for the associated $X_t$. 
Therefore, it is crucial to establish a PT with tight radius $r_{\delta,t}$ for stochastic systems. 
This challenge has been resolved in our recent work \cite{liu2025safetyPT}, where we establish probabilistic tubes with tight $r_{\delta,t}$ for the stochastic system \eqref{sys: c-t ss}. The results are summarized in the following theorem, and a more detailed analysis can be found in \cite{liu2025safetyPT}.

\begin{thm} \label{thm: PT}
    Consider the stochastic system \eqref{sys: c-t ss} and its associated deterministic system \eqref{sys: c-t ds} under Assumption \ref{as: boundness}. Let $X_t$ be the trajectory of \eqref{sys: c-t ss} and $x_t$ be its associated deterministic trajectory over a time horizon $[0,T]$. Given $\delta\in(0,1)$ and tunable parameters $\varepsilon\in(0,1)$ and $\Delta t\in(0,T)$, define
    \begin{equation}\label{eq: r CT}
    \begin{split}
        &r_{\delta,t}= \\
        &\begin{cases}
            e^{ct}\sigma\sqrt{\frac{1-e^{-2cT}}{2c}(\varepsilon_1n+\varepsilon_2\log(1/\delta))},~~c\geq0 \\
            \frac{\sigma(\sqrt{1-e^{2ct}}+\sqrt{e^{-2c\Delta t}-1})}{\sqrt{-2c}}\sqrt{\varepsilon_1n+\varepsilon_2\log\frac{2T}{\delta \Delta t}},~c<0
        \end{cases}
    \end{split}
    \end{equation}
   for any $t\in[0,T]$, where $\varepsilon_1=\frac{\log(\frac{1}{1-\varepsilon^2})}{\varepsilon^2}$ and $\varepsilon_2=\frac{2}{\varepsilon^2}$.
   Then 
   \begin{equation}\label{eq:result}
\mathbb{P}\left(\|X_t-x_t\|\leq r_{\delta,t}, ~\forall t\leq T\right)\geq 1-\delta.
   \end{equation} 
\end{thm}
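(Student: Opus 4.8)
The plan is to study the error process $e_t\defeq X_t-x_t$, which, upon subtracting \eqref{sys: c-t ds} from \eqref{sys: c-t ss}, satisfies $\dd e_t=[f(X_t,u_t,t)-f(x_t,u_t,t)]\dt+g_t(X_t)\dW_t$ with $e_0=0$, since the associated trajectories share the same initial state and control. First I would apply It\^o's formula to $V_t\defeq\|e_t\|^2$. Writing the drift gap in integral form as $f(X_t,u_t,t)-f(x_t,u_t,t)=A_te_t$ with $A_t\defeq\int_0^1 D_xf(x_t+se_t,u_t,t)\,\dd s$, the It\^o drift of $V_t$ equals $2e_t^\top A_te_t+\tr{g_t(X_t)g_t(X_t)^\top}$. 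Because the $\ell_2$ matrix measure is the top eigenvalue of the symmetric part and hence a convex function of the matrix, Jensen's inequality applied to $A_t$ gives $\mu(A_t)\le c$, so $e_t^\top A_te_t\le c\|e_t\|^2$; together with $g_t g_t^\top\preceq\sigma^2 I_n$ (hence $\tr{g_tg_t^\top}\le n\sigma^2$) this yields the key It\^o differential inequality
\begin{equation*}
\dd V_t\le\bigl(2cV_t+n\sigma^2\bigr)\dt+2e_t^\top g_t(X_t)\,\dW_t .
\end{equation*}

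Next I would remove the contraction drift with the integrating factor $e^{-2ct}$ and work with $U_t\defeq e^{-2ct}V_t$, which obeys $\dd U_t\le e^{-2ct}n\sigma^2\dt+\dd N_t$ where $N_t\defeq\int_0^t e^{-2cs}2e_s^\top g_s(X_s)\dW_s$ is a martingale. Integrating, $U_t\le n\sigma^2\tfrac{1-e^{-2ct}}{2c}+N_t$, so for $c\ge0$ the deterministic part is maximized at $t=T$ and $\sup_{t\le T}U_t\le n\sigma^2\tfrac{1-e^{-2cT}}{2c}+\sup_{t\le T}N_t$. Since the stated radius satisfies $r_{\delta,t}^2=e^{2ct}\sigma^2\tfrac{1-e^{-2cT}}{2c}\bigl(\varepsilon_1 n+\varepsilon_2\log(1/\delta)\bigr)$, the claim $\|e_t\|\le r_{\delta,t}$ for all $t\le T$ reduces to the single uniform bound $\sup_{t\le T}U_t\le\sigma^2\tfrac{1-e^{-2cT}}{2c}(\varepsilon_1 n+\varepsilon_2\log(1/\delta))$ with probability at least $1-\delta$.

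It then remains to concentrate $\sup_{t\le T}N_t$. The quadratic variation obeys $\dd\langle N\rangle_t\le 4\sigma^2 e^{-2ct}U_t\,\dt$, and the decisive point is that $N_t$ aggregates the $n$ coordinates of the driving Brownian motion, so its fluctuation is dominated by that of a $\chi^2_n$ variable scaled by $\sigma^2\tfrac{1-e^{-2cT}}{2c}$. Applying a maximal inequality to the exponential supermartingale $\exp(\theta N_t-\tfrac{\theta^2}{2}\langle N\rangle_t)$ and a Chernoff bound on the resulting $\chi^2_n$-type moment generating function with tuning parameter $\theta=\varepsilon^2/2$ gives $(1-\varepsilon^2)^{-n/2}\exp(-\tfrac{\varepsilon^2}{2}a)\le\delta$ exactly when $a\ge\varepsilon_1 n+\varepsilon_2\log(1/\delta)$; this is what produces the dimensionless factor in \eqref{eq: r CT} together with the precise constants $\varepsilon_1=\varepsilon^{-2}\log\tfrac{1}{1-\varepsilon^2}$ and $\varepsilon_2=2\varepsilon^{-2}$, with $\varepsilon\in(0,1)$ the free Chernoff knob.

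I expect the main obstacle to be the passage from a single-time concentration to the uniform-in-time guarantee while keeping the radius tight, compounded by the self-referential quadratic variation $\dd\langle N\rangle_t\le4\sigma^2 e^{-2ct}U_t\,\dt$, which forces the exponential supermartingale to be built with a carefully chosen multiplier so that the $U_t$-dependence is absorbed. This is also where the two cases of \eqref{eq: r CT} diverge: for $c\ge0$ the integrating factor $e^{-2ct}$ is bounded and a single maximal inequality over $[0,T]$ suffices, whereas for $c<0$ the factor $e^{-2ct}=e^{2|c|t}$ grows and a global maximal inequality is no longer tight. In that case I would partition $[0,T]$ into $\lceil T/\Delta t\rceil$ windows of length $\Delta t$, bound the excursion of $\|e_t\|$ within each window (the term $\sqrt{e^{-2c\Delta t}-1}$ accounting for within-window growth and $\sqrt{1-e^{2ct}}$ for the stationary scale), and take a union bound over the windows, which is precisely what turns $\log(1/\delta)$ into $\log\tfrac{2T}{\delta\Delta t}$. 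Balancing the within-window growth against the union-bound cost fixes the role of the tunable $\Delta t$.
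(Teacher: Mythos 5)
There is no proof of Theorem~\ref{thm: PT} in this paper to compare against: the result is imported verbatim from \cite{liu2025safetyPT}, and the text explicitly defers the analysis there. So your proposal can only be judged on its own merits as a reconstruction of that external argument.

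Your skeleton is sound and consistent with the stated constants: It\^o on $V_t=\|X_t-x_t\|^2$, the matrix-measure bound $e_t^\top A_te_t\le c\|e_t\|^2$ via subadditivity of $\mu$ over the integral representation of the drift gap, the integrating factor $e^{-2ct}$, the observation that for $c\ge 0$ the claim reduces to a single uniform bound on $U_t=e^{-2ct}V_t$ because $e^{-2ct}r_{\delta,t}^2$ is constant in $t$, the chi-squared Chernoff calculus that reproduces $\varepsilon_1=\varepsilon^{-2}\log\frac{1}{1-\varepsilon^2}$ and $\varepsilon_2=2\varepsilon^{-2}$, and the window-partition-plus-union-bound explanation of the $\log\frac{2T}{\delta\Delta t}$ factor for $c<0$ are all the right ingredients. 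However, the decisive step is asserted rather than executed, and as you have structured it, it is circular: you propose to concentrate $\sup_{t\le T}N_t$ using the exponential supermartingale $\exp(\theta N_t-\tfrac{\theta^2}{2}\langle N\rangle_t)$, but $\dd\langle N\rangle_t\le 4\sigma^2e^{-2ct}U_t\,\dt$ and $U_t\le n\sigma^2\tfrac{1-e^{-2ct}}{2c}+N_t$, so the bound on $N_t$ depends on $\sup U$ which depends on $\sup N$. You name this as ``the main obstacle'' but do not close it. The standard way to close it is to not isolate $N_t$ at all: build the supermartingale directly as $\exp\bigl(\lambda_tV_t-n\sigma^2\int_0^t\lambda_s\,\dd s\bigr)$ with a deterministic $\lambda_t$ solving the Riccati equation $\dot\lambda_t+2c\lambda_t+2\sigma^2\lambda_t^2=0$, so that the quadratic-variation contribution $2\lambda_t^2e_t^\top g_tg_t^\top e_t\le 2\sigma^2\lambda_t^2V_t$ is absorbed into the drift cancellation, and then apply Ville's maximal inequality; the $n$-dependence enters through the It\^o correction $\mathrm{tr}(g_tg_t^\top)\le n\sigma^2$ (not, as you suggest, because the scalar martingale $N_t$ ``aggregates $n$ coordinates''), and integrating $\lambda_s$ produces exactly the $(1-\varepsilon^2)^{-n/2}$ factor. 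Without this (or an equivalent self-normalized/stochastic-Gr\"onwall device), your argument does not yet constitute a proof.
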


Notice that $r_{\delta,t}$ in Theorem \ref{thm: PT} is imposed on the entire continuous-time trajectory, so it can be leveraged to build the chance constraint on the trajectory level. Moreover, the derived $r_{\delta,t}$ only has an $\mO(\sqrt{\log(1/\delta)})$ dependence on $\delta$, making it sufficiently tight when $\delta$ is extremely small. Figure \ref{fig:linear eg plots} illustrates the PT of stochastic linear systems $\dd X_t=cX_t\dt+\sigma\dW_t$ with different $c$. It is clear that $r_{\delta,t}$ given by Theorem \ref{thm: PT} keeps tight when $\delta=10^{-3}$.                                
\begin{figure}[t]
	\centering
        \includegraphics[width =0.47\linewidth]{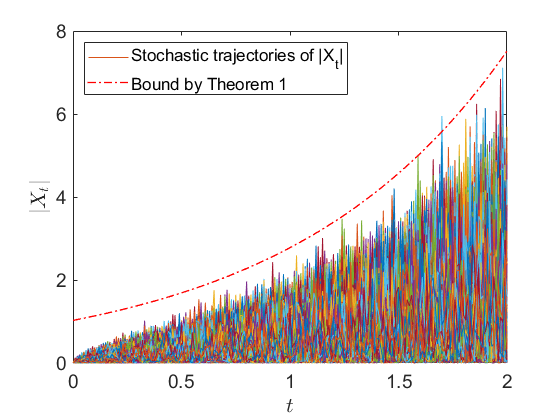}
  \includegraphics[width =0.47\linewidth]{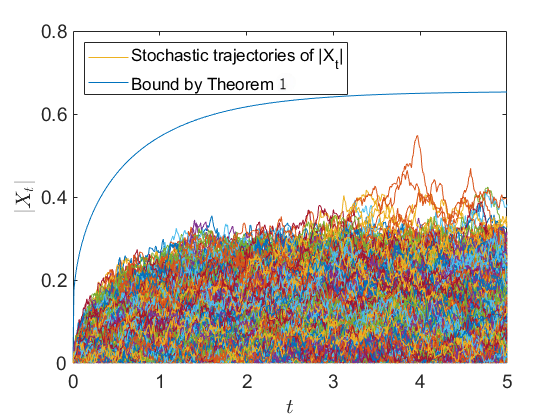}
	\caption{Trajectories of $\|X_t-x_t\|$ of the linear system $\dd X_t=cX_t\dt+\sigma\dW_t$ with $\sigma=\sqrt{0.1}$ and different $c,~T$. Each figure contains 5000 independent trajectories and the $r_{\delta,t}$ calculated by Theorem \ref{thm: PT} with $\delta=10^{-3}$ and $\varepsilon=15/16$. Left: $c=1,~T=2$. Right: $c=-0.5,~T=5$}
\label{fig:linear eg plots}
\end{figure} 

\subsection{Trajectory Optimization Scheme via Set Erosion}
By utilizing the set-erosion strategy with the erosion depth given by Theorem \ref{thm: PT}, the chance constraint (\ref{eq: sto opt traj}c) can be effectively converted to a deterministic constraint. With this deterministic constraint, a deterministic trajectory optimization scheme dual to \eqref{eq: sto opt traj} is established as follows: 
\begin{subequations}\label{eq: opt trj scheme}
    \begin{align}
        &\min_{\bm{x,u}} J_d(\bm{x,u})= \int_{0}^{T} \mL_t(x_{t},u_{t})\dt + \Phi_T(x_T)\\
        \mbox{s.t.}\quad & \dot{x}_{t}=f(x_t,u_t,t),~ x_0=X_0, \\
        &x_t\in \mC\ominus\mathcal{B}^n(r_{\delta,t},0),~\forall t\in[0,T], \label{eq:safety constraint} \\
        &u_t\in\mU.
    \end{align}
\end{subequations}
where the optimization variables $\bm{x}=\{x_t:t\in[0,T]\}$ and $\bm{u}=\{u_t:t\in[0,T]\}$ are deterministic, the stochastic cost $J_s$ in (\ref{eq: sto opt traj}a) is substituted by its deterministic counterpart $J_d$, and $r_{\delta,t}$ is as Theorem \ref{thm: PT}. 
During period $[0,T]$, we solve the deterministic optimization problem \eqref{eq: sto opt traj} and obtain the solution $\{\bm{x}^*,\bm{u}^*\}$, then apply $\bm{u}^*$ to the system \eqref{sys: c-t ss} to acquire a stochastic trajectory $\bm{X}^*=\{X_t^*:t\in[0,T]~|~\bm{u}^*\}$ with the given $X_0\in\mC$. One concern is the feasibility of $\{\bm{X}^*,\bm{u}^*\}$ in solving the stochastic trajectory optimization problem \eqref{eq: sto opt traj}. Following the set-erosion strategy, it turns out that if $\bm{u}^*$ is a feasible solution of \eqref{eq: opt trj scheme}, then $\{\bm{X}^*,\bm{u}^*\}$ is a feasible solution of \eqref{eq: sto opt traj}, as in the following theorem.

\begin{thm} \label{thm: feasibility}
    Consider the stochastic system \eqref{sys: c-t ss} and its associated deterministic system \eqref{sys: c-t ds} satisfying Assumption \ref{as: boundness}. Suppose that $\{\bm{x}^*,\bm{u}^*\}=\{(x_t^*,u_t^*):t\in[0,T]\}$ is a feasible solution of \eqref{eq: opt trj scheme}. Apply $\bm{u}^*$ to (\ref{eq: sto opt traj}b) to obtain a stochastic trajectory $\bm{X}^*=\{X_t^*:t\in[0,T]~|~\bm{u}^*\}$, then $\{\bm{X^*,u}^*\}$ satisfies constraints (\ref{eq: sto opt traj}b)-(\ref{eq: sto opt traj}d).
\end{thm}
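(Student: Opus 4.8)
The plan is to verify that applying $\bm{u}^*$ to the stochastic dynamics reproduces each of the three constraints (\ref{eq: sto opt traj}b)--(\ref{eq: sto opt traj}d) in turn, with the trajectory-level chance constraint (\ref{eq: sto opt traj}c) being the only nontrivial one. Constraint (\ref{eq: sto opt traj}b) holds by construction, since $\bm{X}^*$ is \emph{defined} as the solution of \eqref{sys: c-t ss} driven by $\bm{u}^*$ from the same initial state $X_0$. Constraint (\ref{eq: sto opt traj}d) is inherited directly from the feasibility of $\{\bm{x}^*,\bm{u}^*\}$ for the deterministic scheme, namely from (\ref{eq: opt trj scheme}d). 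So the remaining work is to establish (\ref{eq: sto opt traj}c).

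First I would unpack the geometric content of the eroded set. By the definition of the Minkowski difference, the membership $x_t^*\in\mC\ominus\mathcal{B}^n(r_{\delta,t},0)$ is equivalent to the inclusion $\{x_t^*\}\oplus\mathcal{B}^n(r_{\delta,t},0)\subseteq\mC$, i.e.\ every point within distance $r_{\delta,t}$ of $x_t^*$ lies in $\mC$. Since $\{\bm{x}^*,\bm{u}^*\}$ is feasible, the safety constraint \eqref{eq:safety constraint} supplies this inclusion for every $t\in[0,T]$. Thus, at any fixed time, the bound $\|X_t^*-x_t^*\|\leq r_{\delta,t}$ already forces $X_t^*\in\mC$.

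Next I would invoke Theorem \ref{thm: PT}. The key observation is that $x_t^*$ and $X_t^*$ are \emph{associated} trajectories: both start at $X_0$ and are driven by the same control $\bm{u}^*$, the former through the deterministic dynamics \eqref{sys: c-t ds} and the latter through the stochastic dynamics \eqref{sys: c-t ss}. Hence Theorem \ref{thm: PT} applies with exactly the radius $r_{\delta,t}$ used in the erosion, yielding $\prob{\|X_t^*-x_t^*\|\leq r_{\delta,t},~\forall t\leq T}\geq 1-\delta$.

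Finally I would combine the two ingredients through an event-inclusion argument, which is the step that deserves the most care because of the universal quantifier over $t$. On the event $\{\|X_t^*-x_t^*\|\leq r_{\delta,t},~\forall t\leq T\}$, the pointwise implication from the second paragraph holds at \emph{every} $t$, so $X_t^*\in\mC$ for all $t\leq T$ on this event; this gives the inclusion $\{\|X_t^*-x_t^*\|\leq r_{\delta,t},~\forall t\leq T\}\subseteq\{X_t^*\in\mC,~\forall t\leq T\}$. Monotonicity of probability then delivers $\prob{X_t^*\in\mC,~\forall t\leq T}\geq 1-\delta$, which is precisely (\ref{eq: sto opt traj}c). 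The only subtlety to guard against is a mismatch between the ``for all $t$'' inside the tube and inside the safety event; matching the erosion depth to the PT radius is exactly what makes Theorem \ref{thm: PT} a \emph{trajectory-level} guarantee rather than a pointwise one, so the inclusion holds over the entire horizon instead of at isolated times.
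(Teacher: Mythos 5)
Your proposal is correct and follows essentially the same route as the paper: invoke the probabilistic tube of Theorem \ref{thm: PT} for the associated trajectories, use the Minkowski-difference characterization of the eroded set to convert the tube event into the safety event, and conclude by monotonicity of probability. If anything, your explicit statement that $x_t^*\in\mC\ominus\mathcal{B}^n(r_{\delta,t},0)$ is equivalent to $\{x_t^*\}\oplus\mathcal{B}^n(r_{\delta,t},0)\subseteq\mC$ is slightly cleaner than the paper's step, which writes $\prob{X_t^*\in\mC,\ \forall t\le T}$ as \emph{equal} to $\prob{X_t^*\in\mC\ominus\mathcal{B}^n(r_{\delta,t},0)\oplus\mathcal{B}^n(r_{\delta,t},0),\ \forall t\le T}$ even though in general only the inclusion $\mC\ominus B\oplus B\subseteq\mC$ (and hence the needed inequality direction) holds.
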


\begin{proof}
    Since $\bm{u}^*$ clearly satisfies (\ref{eq: sto opt traj}d), it suffices to show that $\prob{X_t^*\in\mC,~ \forall t\leq T}\geq 1-\delta$. Since $X_0=x_0$ and the same $u_t^*$ is applied to both $X_t^*$ and $x_t^*$, $X_t^*$ and $x_t^*$ are associated trajectories. By the definition of $r_{\delta,t}$ \eqref{eq: def PT}, it holds that
    \begin{equation} \label{eq: P(Xt-xt)}
        \begin{split}
            &\prob{\|X_t^*-x_t^*\|\leq r_{\delta,t},~\forall t\leq T} \\
            =\,&\prob{X_t^*\in\{x_t^*\}\oplus\mathcal{B}^n(r_{\delta,t},0),~\forall t\leq T}
            \geq 1-\delta
        \end{split}
    \end{equation}
    Since $x_t^*\in\mC\ominus\mathcal{B}^n(r_{\delta,t},0)$, \eqref{eq: P(Xt-xt)} yields 
    \begin{equation}
        \begin{split}
            &\prob{X_t^*\in\mC,~ \forall t\leq T} \\
            =\,&\prob{X_t^*\in\mC\ominus\mathcal{B}^n(r_{\delta,t},0)\oplus\mathcal{B}^n(r_{\delta,t},0),~ \forall t\leq T} \\
            \geq\,& \prob{X_t^*\in\{x_t^*\}\oplus\mathcal{B}^n(r_{\delta,t},0),~\forall t\leq T} \geq 1-\delta
        \end{split}
    \end{equation}
    This completes the proof.
\end{proof}

Theorem \ref{thm: feasibility} demonstrates that to find a feasible solution for the problem \eqref{eq: sto opt traj}, it is sufficient to solve the deterministic trajectory optimization problem \eqref{eq: opt trj scheme}. The feasibility of \eqref{eq: opt trj scheme} depends on the value of $r_{\delta,t}$. If $r_{\delta,t}$ is too large, then the constraint (\ref{eq: opt trj scheme}c) can be overly restrictive. Using $r_{\delta,t}$ derived from \ref{thm: PT}, (\ref{eq: opt trj scheme}c) is insensitive to the probability level $\delta$. Regarding the time dependence, $r_{\delta,t}$ exhibits only an $\mO(1)$ dependence on the current time $t$, and an $\mO(\sqrt{\log T})$ dependence on the terminal time $T$, rendering the constraint (\ref{eq: opt trj scheme}c) insensitive to time. When $c>0$, $r_{\delta,t}$ grows exponentially with $t$, making (\ref{eq: opt trj scheme}c) highly sensitive to time.

Compared to \eqref{eq: sto opt traj}, \eqref{eq: opt trj scheme} is a constrained deterministic optimal control problem, which is tractable with various deterministic methods.
In applications, the choice of deterministic methods depends on the realization of (\ref{eq: opt trj scheme}c). For instance, when $\mC$ is defined as the complementary of the unsafe region $\mC_u$, the constraint (\ref{eq: opt trj scheme}c) is equivalent to $x_t\notin \mC_u\oplus\mathcal{B}^{n}(r_{\delta,t},0),~\forall t\in[0,T]$, where $\mC_u\oplus\mathcal{B}^{n}(r_{\delta,t},0)$ is the expansion of unsafe region. When $\mC_u$ is given as the joint of obstacles, $\mC_u\oplus\mathcal{B}^{n}(r_{\delta,t},0)$ can be efficiently approximated to the joint of similar obstacles with a larger size. In this case, \eqref{eq: opt trj scheme} becomes a standard trajectory optimization problem with obstacles, on which various efficient methods have been proposed \cite{tezuka2020time,9987680}.

\subsection{Performance Analysis}
The performance of a trajectory optimization method is typically evaluated by its total cost over the time horizon. For stochastic cost functions, it is usually challenging to analyze their expectations. We next show that, in problem \eqref{eq: sto opt traj}, the total cost $J_s^*=J_s(\bm{X}^*,\bm{u}^*)$ induced by our method can be quantified when the cost functions $\mL_t(x,u)$ and $\Phi_T(x)$ are $L$-Lipschitz continuous.
\begin{thm} \label{thm: perform}
    Consider problem \eqref{eq: sto opt traj} and \eqref{eq: opt trj scheme} under Assumption \ref{as: boundness}. Suppose that \eqref{eq: opt trj scheme} has at least one feasible solution $\{\bm{x}^*,\bm{u}^*\}=\{(x_t^*,u_t^*):t\in[0,T]\}$, $\mL_t(x,u)$ is $L$-Lipschitz, and $\Phi_T(x)$ is $L_T$-Lipschitz for some constants $L,L_T>0$. Let $\bm{X}^*=\{X_t^*:t\in[0,T]\}$ be the stochastic trajectory of \eqref{sys: c-t ss} with initial state $X_0$ and control input $u^*_t$, $J_s^*=J_s(\bm{X}^*,\bm{u}^*)$ and $J_d^*=J_d(\bm{x}^*,\bm{u}^*)$. Then it holds that 
    \begin{equation*}
    \begin{split}
    J_s^*-J_d^* 
        \leq \int_0^T\!\!\sqrt{\frac{L^2n\sigma^2(e^{2ct}-1)}{2c}}\dt+\!\sqrt{\frac{L_T^2n\sigma^2(e^{2cT}-1)}{2c}}.
        % &\int_{0}^T\mbE(\mL_t(X_t^*,u_t^*))\dt-\int_{0}^Tl_t(x_t^*,u_t^*)\dt \\
        % \leq &\int_0^T\sqrt{\frac{L^2n\sigma^2(e^{2ct}-1)}{2c}}\dt.
    \end{split}
    \end{equation*}
\end{thm}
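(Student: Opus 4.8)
The plan is to bound the cost gap by the expected pathwise deviation between the stochastic and deterministic trajectories, and then to control that deviation through a second-moment estimate obtained via It\^o calculus. First I would write the difference explicitly as
\[
J_s^* - J_d^* = \mbE\left\{\int_0^T \left[\mL_t(X_t^*,u_t^*) - \mL_t(x_t^*,u_t^*)\right]\dt + \Phi_T(X_T^*) - \Phi_T(x_T^*)\right\},
\]
and invoke the Lipschitz hypotheses to bound each drift integrand by $L\|X_t^*-x_t^*\|$ and the terminal term by $L_T\|X_T^*-x_T^*\|$. Exchanging expectation and time integral by Fubini--Tonelli reduces the task to estimating $\mbE\|X_t^*-x_t^*\|$ for each $t$. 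Since the square root is concave, Jensen's inequality gives $\mbE\|X_t^*-x_t^*\|\le\sqrt{\mbE\|X_t^*-x_t^*\|^2}$, so it suffices to control the second moment, and the announced bound will follow once I show $\mbE\|X_t^*-x_t^*\|^2\le \frac{n\sigma^2}{2c}(e^{2ct}-1)$ (which specializes the integrand to $L\sqrt{\tfrac{n\sigma^2(e^{2ct}-1)}{2c}}$ and the terminal term to $L_T\sqrt{\tfrac{n\sigma^2(e^{2cT}-1)}{2c}}$).

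The core of the argument is a differential inequality for $\phi(t):=\mbE\|X_t^*-x_t^*\|^2$. Setting $e_t=X_t^*-x_t^*$, the two trajectories share the same control $u_t^*$ and initial state, so $e_0=0$ and
\[
\dd e_t = \left[f(X_t^*,u_t^*,t)-f(x_t^*,u_t^*,t)\right]\dt + g_t(X_t^*)\dW_t.
\]
Applying It\^o's formula to $\|e_t\|^2$ and taking expectations (the $\dW_t$-integral is a zero-mean martingale because $g_t$ is bounded under Assumption~\ref{as: boundness}) yields
\[
\frac{\dd}{\dt}\phi(t) = 2\,\mbE\!\left[e_t^\top\!\left(f(X_t^*,u_t^*,t)-f(x_t^*,u_t^*,t)\right)\right] + \mbE\!\left[\tr{g_t(X_t^*)g_t(X_t^*)^\top}\right].
\]

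Here I would use both parts of Assumption~\ref{as: boundness}. For the drift term, the mean-value representation $f(X_t^*,u_t^*,t)-f(x_t^*,u_t^*,t)=\left(\int_0^1 D_xf(x_t^*+se_t,u_t^*,t)\,\dd s\right)e_t$, together with the defining property of the logarithmic norm $v^\top A v\le \mu(A)\|v\|^2$ integrated over the segment joining $x_t^*$ and $X_t^*$, gives $e_t^\top(f(X_t^*,u_t^*,t)-f(x_t^*,u_t^*,t))\le c\|e_t\|^2$. For the diffusion term, $g_t g_t^\top\preceq\sigma^2 I_n$ gives $\tr{g_t(X_t^*)g_t(X_t^*)^\top}\le n\sigma^2$. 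Combining these produces the linear differential inequality $\phi'(t)\le 2c\,\phi(t)+n\sigma^2$ with $\phi(0)=0$, whose Gr\"onwall/comparison solution is $\phi(t)\le\frac{n\sigma^2}{2c}(e^{2ct}-1)$, valid for either sign of $c$ (and extending continuously to $n\sigma^2 t$ as $c\to0$). Substituting back through Jensen and the Lipschitz bounds gives exactly the claimed estimate.

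The main obstacle I anticipate is the drift estimate: correctly passing from the pointwise matrix-measure bound $\mu(D_xf)\le c$ to the one-sided inner-product bound $e_t^\top(f(X_t^*,u_t^*,t)-f(x_t^*,u_t^*,t))\le c\|e_t\|^2$, which relies on the integral form of $D_xf$ along the segment between $x_t^*$ and $X_t^*$ and on the characterization $\mu(A)=\lambda_{\max}((A+A^\top)/2)$ for the $\ell_2$ logarithmic norm. A secondary technical point worth verifying is the vanishing expectation of the It\^o integral, which the boundedness of $g_t$ secures directly so that no localization argument is required.
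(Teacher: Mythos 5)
Your proposal is correct and follows essentially the same route as the paper's proof: decompose the cost gap via the Lipschitz hypotheses, reduce to bounding $\mbE\|X_t^*-x_t^*\|$ by Jensen, derive the differential inequality $\phi'(t)\le 2c\,\phi(t)+n\sigma^2$ via It\^o's formula and the matrix-measure/diffusion bounds of Assumption~\ref{as: boundness}, and close with Gr\"onwall. The only cosmetic differences are that the paper takes expectations through the Fokker--Planck equation rather than the martingale property of the It\^o integral, and cites a reference for the one-sided inner-product bound where you spell out the mean-value argument.
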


\bigskip
\begin{proof}

Define $V_t = \|X_t^*-x_t^*\|^2$, then a direct application of the Ito's Lemma \cite{sarkka2019applied} yields
    \begin{equation*}
    \begin{split}
         \dd V_t & = 2\left(X_t^*-x_t^*\right)^{\top}(f(X^*_t,u_t,t)-f(x^*_t,u_t,t))\dt \\ & + \mathrm{tr}(g_t(X_t^*)^{\top}g_t(X_t^*))\dt + 2 (X_t^*-x_t^*)^{\top}g_t(X_t^*)\dW_t. \label{eq: dV_t Ito}
    \end{split}
    \end{equation*}
Based on the Fokker–Planck equation \cite{sarkka2019applied}, $\mbE(V_t)$ satisfies
\begin{equation}\label{eq: dE}
\begin{split}
    \frac{\dd \mathbb{E}(V_t)}{\dt}=&2\mbE[\left(X_t^*-x_t^*\right)^{\top}(f(X_t^*,u_t,t)-f(x_t^*,u_t,t))] \\
    &+\mbE[\mathrm{tr}(g_t(X_t^*)^{\top}g_t(X_t^*))].
\end{split}
\end{equation}
According to \cite{FB-CTDS}, Assumption \ref{as: boundness} implies that $(x-y)^{\top}(f(x,u,t)-f(y,u,t))\leq c\|x-y\|^2$ holds for any $x,y\in\R^n$. Therefore, \eqref{eq: dE} can be upper bounded by
\begin{equation}
   \frac{\dd \mathbb{E}(V_t)}{\dt} \leq 2c\,\mathbb{E}(V_t) + n\sigma^2, \quad V_0=0.
\end{equation}
By Gr\"{o}wall Inequality, it follows the expectation bound
\begin{equation} \label{eq: E-bound}
    \mbE(\|X_t-x_t\|^2)=\mbE(V_t)\leq \frac{n\sigma^2(e^{2ct}-1)}{2c}. 
\end{equation}

Now, we are ready to analyze the cost. By Lipschitz continuity, we have 
\begin{equation}
    \mbE(\mL_t(X_t^*,u_t^*))-\mL_t(x_t^*,u_t^*)\leq L\mbE(\|X_t^*-x_t^*\|),
\end{equation}
and the same property holds for $\Phi_T(x)$. By \eqref{eq: E-bound},
\begin{equation}
    \begin{split}
        L\mbE(\|X_t^*-x_t^*\|)
        \leq \sqrt{\frac{L^2n\sigma^2(e^{2ct}-1)}{2c}}
    \end{split}
\end{equation}

Therefore,
\begin{equation} \label{eq: cost gap}
    \begin{split}
    &J_s^*-J_d^* =\int_{0}^T\left(\mbE(\mL_t(X_t^*,u_t^*))-\mL_t(x_t^*,u_t^*)\right)\dt \\
    &+\mbE(\Phi_T(X_T^*,u_T^*))-\Phi_T(x_T^*,u_T^*)\\
        \leq& \int_0^T\sqrt{\frac{L^2n\sigma^2(e^{2ct}-1)}{2c}}\dt+\sqrt{\frac{L_T^2n\sigma^2(e^{2cT}-1)}{2c}}
    \end{split}
\end{equation}
This completes the proof.
\end{proof}
  
When $c>0$, the integral in the last row of \eqref{eq: cost gap} has an analytical result
$\sqrt{\frac{L^2n\sigma^2}{2c^3}}\left(\sqrt{e^{2cT}-1}-\tan^{-1}(\sqrt{e^{2cT-1}})\right),$
which grows exponentially with respect to $T$. When $c=0$, the upper bound of \eqref{eq: cost gap} reduces to $\frac{2\sqrt{L^2n\sigma^2}}{3}T^{3/2}+\sqrt{L_T^2n\sigma^2T}$. When $c<0$, \eqref{eq: cost gap} is upper bounded by $\sqrt{\frac{L^2n\sigma^2}{-2c}}T$ plus a constant. If we take into account the average cost gap $\frac{1}{T}\left(J_s^*-J_d^*\right)$, the average cost gap diverges with $T$ when $c\geq0$ and converges to a finite value when $c<0$. 

Furthermore, when the systems \eqref{sys: c-t ss} and \eqref{sys: c-t ds} are linear, that is, $f(x_t,u_t,t)=A_tx_t+B_tu_t$ with matrices $A\in\R^{n\times n}$ and $B_t\in\R^{n\times p}$, the total cost can be quantified under the more relaxed assumption that the cost functions are $L$-smooth. See the following corollary.

\begin{corollary} \label{coro: L-smooth}
     Consider the problem \eqref{eq: sto opt traj} and \eqref{eq: opt trj scheme}. Suppose that the systems \eqref{sys: c-t ss} and \eqref{sys: c-t ds} are linear and under Assumption \ref{as: boundness}, $\mL_t(x,u)$ is $L$-smooth, and $\Phi_T(x)$ is $L_T$-smooth for some constants $L,L_T>0$. Let $\bm{X}^*=\{X_t^*:t\in[0,T]\}$, $J_s^*$, $\{\bm{x}^*,\bm{u}^*\}$, and $J_d^*$ follow the same definition as those in Theorem \ref{thm: perform}. Then it holds that 
    \begin{equation*}
    \begin{split}
       J_s^*-J_d^*
        \leq \frac{Ln\sigma^2(\frac{e^{2cT}-1}{2c}-T)}{4c}+\frac{L_Tn\sigma^2(e^{2cT}-1)}{4c}.
    \end{split}
    \end{equation*}
\end{corollary}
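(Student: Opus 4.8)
The plan is to follow the same skeleton as the proof of Theorem~\ref{thm: perform}, but to exploit two features of the present setting that the Lipschitz argument did not need: the quadratic upper bound supplied by $L$-smoothness, and the linearity of $f$. An $L$-smooth running cost obeys the descent-type inequality $\mL_t(X_t^*,u_t^*)\le \mL_t(x_t^*,u_t^*)+\nabla_x\mL_t(x_t^*,u_t^*)^{\top}(X_t^*-x_t^*)+\frac{L}{2}\|X_t^*-x_t^*\|^2$, and taking expectations leaves a first-order term and a second-order term. The entire corollary rests on showing that the first-order term vanishes, after which the second-order term is controlled exactly by the second-moment bound \eqref{eq: E-bound} already established inside Theorem~\ref{thm: perform}.

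First I would show that the deviation has zero mean. Writing $Z_t=X_t^*-x_t^*$ and subtracting \eqref{sys: c-t ds} from \eqref{sys: c-t ss}, linearity of $f$ collapses the drift difference to $f(X_t^*,u_t^*,t)-f(x_t^*,u_t^*,t)=A_t Z_t$, so $\dd Z_t=A_tZ_t\,\dt+g_t(X_t^*)\,\dW_t$. Taking expectations, the Ito integral contributes nothing, since its integrand is uniformly bounded by Assumption~\ref{as: boundness} and hence the integral is a zero-mean martingale; this yields the linear ODE $\frac{\dd}{\dt}\mbE(Z_t)=A_t\,\mbE(Z_t)$ with $\mbE(Z_0)=0$, so that $\mbE(Z_t)=0$ for every $t\in[0,T]$.

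Next, because $x_t^*$ and $u_t^*$ are deterministic, the gradient $\nabla_x\mL_t(x_t^*,u_t^*)$ pulls outside the expectation and the first-order term becomes $\nabla_x\mL_t(x_t^*,u_t^*)^{\top}\mbE(Z_t)=0$. The smoothness inequality then reduces to $\mbE(\mL_t(X_t^*,u_t^*))-\mL_t(x_t^*,u_t^*)\le \frac{L}{2}\mbE(\|Z_t\|^2)$, and the identical argument gives $\mbE(\Phi_T(X_T^*))-\Phi_T(x_T^*)\le \frac{L_T}{2}\mbE(\|Z_T\|^2)$. Substituting the bound \eqref{eq: E-bound}, integrating the running-cost contribution over $[0,T]$ using $\int_0^T(e^{2ct}-1)\,\dt=\frac{e^{2cT}-1}{2c}-T$, and adding the terminal term produces precisely the stated bound.

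The main obstacle — and the reason the statement is restricted to linear systems — is exactly the vanishing of the first-order term, the one place where linearity is indispensable: for nonlinear $f$ the deviation $Z_t$ has a nonzero mean in general, the first-order term survives, and it can no longer be discarded under $L$-smoothness alone, which is why Theorem~\ref{thm: perform} relies instead on the stronger Lipschitz hypothesis. A minor point to verify carefully is that the state-dependent diffusion $g_t(X_t^*)$ still yields a zero-expectation stochastic integral, which follows directly from the uniform bound in Assumption~\ref{as: boundness}; everything after the first-order cancellation is the routine integration already carried out in Theorem~\ref{thm: perform}.
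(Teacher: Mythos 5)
Your proposal is correct and follows essentially the same route as the paper: the $L$-smoothness descent inequality, cancellation of the first-order term via $\mbE(X_t^*)=x_t^*$ for linear drift, and then the second-moment bound \eqref{eq: E-bound} integrated over $[0,T]$. The only difference is that you explicitly derive $\mbE(X_t^*-x_t^*)=0$ from the zero-mean martingale property of the stochastic integral, whereas the paper simply cites this as a well-known fact for linear systems.
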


\bigskip
\begin{proof}
    By the property of $L$-smoothness, we know that
    \begin{equation} \label{eq: L-smooth}
        \begin{split}
            &\mbE(\mL_t(X_t^*,u_t^*))\leq \mL_t(x_t^*,u_t)\\
            &+\innerp{\nabla_{x_t^*}(x_t^*,u_t^*),~ \mbE(X_t^*)-x_t^*}+\frac{L}{2}\mbE(\|X_t^*-x_t^*\|^2).
        \end{split}
    \end{equation}
    The same property holds for $\Phi_T(x)$. For associated trajectories of linear systems, it is well known that $\mbE(X_t^*)=x_t$, so $\innerp{\nabla_{x_t^*}(x_t^*,u_t^*),~ \mbE(X_t^*)-x_t^*}=0$. Apply \eqref{eq: E-bound} to \eqref{eq: L-smooth}, we get
    \begin{equation}
        \mbE(\mL_t(X_t^*,u_t^*))-\mL_t(x_t^*,u_t)\leq \frac{Ln\sigma^2(e^{2ct}-1)}{4c}
    \end{equation}
Therefore,
\begin{equation}\label{eq: cost gap L-smooth}
    \begin{split}
       & J_s^*-J_d^*\\
        \leq &\int_0^T\frac{Ln\sigma^2(e^{2ct}-1)}{4c}\dt+\frac{L_Tn\sigma^2(e^{2cT}-1)}{4c} \\
        =&\frac{Ln\sigma^2(\frac{e^{2cT}-1}{2c}-T)}{4c}+\frac{L_Tn\sigma^2(e^{2cT}-1)}{4c}.
    \end{split}
\end{equation}
This completes the proof.
\end{proof}

Especially, when $\mL_t(x,u)=\|x\|_Q^2+\|u\|_R^2$ and $\Phi_T(x)=\|x\|_S^2$ where $Q,R,S$ are positive definite matrices, the problem \eqref{eq: sto opt traj} becomes a stochastic linear quadratic control problem under chance constraint. 
In this case, the $L$-smoothness condition is satisfied with $L=2\lambda_Q$ and $L_T=2\lambda_S$, where $\lambda_Q$ and $\lambda_S$ are the maximal eigenvalues of $Q$ and $S$, and a straightforward corollary of \eqref{eq: cost gap L-smooth} yields
\begin{equation*}
    J_s^*-J_d^*\leq \frac{\lambda_Qn\sigma^2(\frac{e^{2cT}-1}{2c}-T)}{2c}+\frac{\lambda_Sn\sigma^2(e^{2cT}-1)}{2c}.
\end{equation*}

\section{Case Studies}\label{sec: case}
The proposed stochastic trajectory optimization algorithm is applied to the collision-free motion planning problem for two representative systems subject to stochastic noise: one linear and one nonlinear. The environments are cluttered with obstacles. The unsafe region is defined as the union of all obstacles, $\mC_u = \bigcup_{i=1}^{N} \mC_u^i$, where $\mC_u^i$ denotes the region occupied by the $i$-th obstacle.

\subsection{3D Double Integrator}\label{sec:double integrator}

Consider the following 3D double integrator system
\begin{equation}\label{eq:continuous_double_integrator_3d_compact}
    \dX_t = 
    \left(
    \underbrace{
    \begin{bmatrix}
    \mathbf{0}_{3 \times 3} & \mathbf{I}_3 \\
    \mathbf{0}_{3 \times 3} & \mathbf{0}_{3 \times 3}
    \end{bmatrix}}_{\mathbf{A}}
    X_t +
    \underbrace{
    \begin{bmatrix}
    \mathbf{0}_{3 \times 3} \\
    \frac{1}{m}\mathbf{I}_3
    \end{bmatrix}}_{\mathbf{B}}
    u_t
    \right)\dt + g_t \dW_t,
\end{equation}
where $X_t$ represents the 6-dimensional state vector $[p_x\ p_y\ p_z\ v_x\ v_y\ v_z]^\top$, which are the position and velocity of the mass point, $u_t$ is the control input, $g_t\dW_t$ is the stochastic disturbance with a 6-demensional Wiener process $W_t$. $g_t$ is set to $0.08\mathbf{I}_6$. We set the mass $m=1$. The goal of the stochastic trajectory optimization is to drive the closed-loop system from the start point $[0\ 0\ 0]^\top$ to the end point $[2\ 2\ 2]^\top$ in $T=5$ while avoiding the obstacles with probability $1-10^{-4}$, as shown in Figure~\ref{fig: double integrator}. The start and end velocities are constrained to $\mathbf{0}$. 

\begin{figure}
    \centering
    \includegraphics[width=1\linewidth]{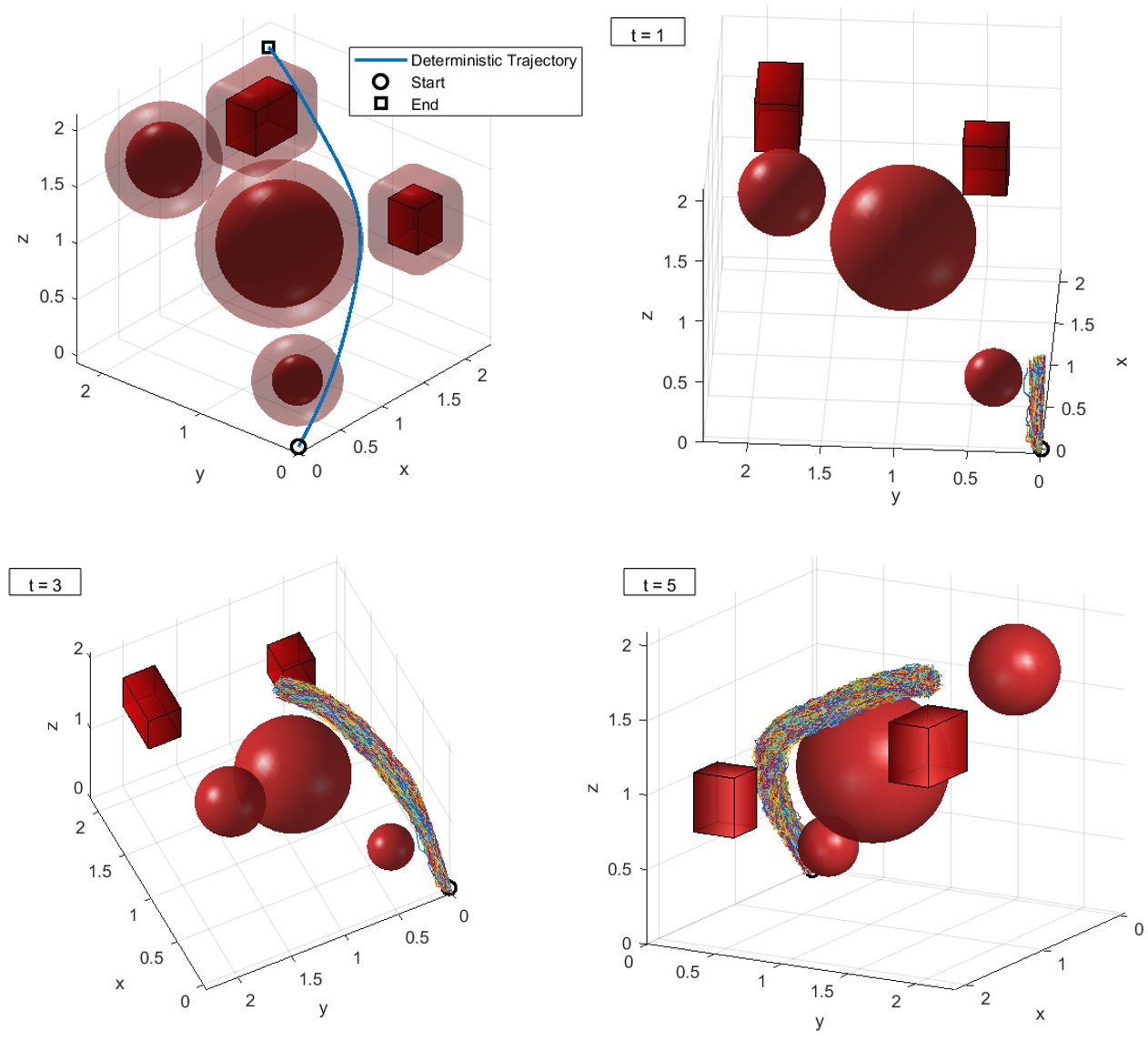} % Adjust width as needed
    \caption{Trajectory optimization of the double integrator system~\eqref{eq:continuous_double_integrator_3d_compact} with $1-10^{-4}$ guarantee. \textbf{Top left:} The blue curve is the solution of the deterministic trajectory optimization problem. The solid red objects represent the obstacles, while the transparent regions denote the corresponding set erosion. \textbf{Top right}, \textbf{Bottom left} and \textbf{Bottom right}: Visualization of stochastic trajectories at $t = 1$, $3$, and $5$ seconds, from different viewing angles. Each curve represents an independent trajectory of the stochastic system. The stochastic trajectories are simulated with the optimal control input curve.}
    \label{fig: double integrator}
\end{figure}

We construct the deterministic trajectory optimization problem \eqref{eq: opt trj scheme} with $\delta = 10^{-4}$, $\varepsilon=0.9$ and $\Delta t=0.01$. We consider the associated deterministic system and add a linear feedback controller with gain $K=[-10\cdot\mathbf{I}_3\ -5\cdot\mathbf{I}_3]$ to get a contractive closed-loop deterministic system with $A_{\text{cl}} = A+BK$. Following~\cite{chuchu2017simulation}, the optimal contraction rate can be acquired from the following optimization problem
\begin{align*}
        \min_{c_P \in \mathbb{R},\, P \succ 0} \quad & c_P \\
    \text{s.t.} \quad & A_{\text{cl}}^\top P + P A_{\text{cl}} \succeq 2 c_P P.
\end{align*}
This problem can be solved by a bisection search over $c_P$.
We adopt the cost function
\begin{align*}\label{eq:cost func}
    \mL_t \left(x_{t}, (u_{t},x^{\text{ref}}_t)\right) = \|x_t - x^{\text{init}}_t\|^2 + 0.5\|u_t\|^2 + \|x_t - x^{\text{ref}}_t\|^2
\end{align*}
where $x^{\text{init}}_t$ is the initial guess of the state trajectory, and $x^{\text{ref}}_t$ is the reference trajectory included in the control input of the closed-loop system. The safety constraint~\eqref{eq:safety constraint} is implemented by a series of inequality constraints $r_i^2 - \|x_t-a_i\|^2\leq0$, such that $\mathcal{B}(r_i, a_i)$ is a cover of $\mC_u^i\oplus\mathcal{B}(r_{\delta,t},0)$.

To validate the effectiveness of the proposed method, we solve the deterministic trajectory optimization problem with OptimTraj~\cite{Kelly_OptimTraj_Trajectory_Optimization_2022} and simulate $10^5$ stochastic trajectories using the resulting optimal control inputs. The stochastic trajectories are visualized in Figure~\ref{fig: double integrator}. As shown, all simulated trajectories remain collision-free. The cost of the optimal solution, along with the mean cost of the stochastic trajectories, is illustrated in the left panel of Figure~\ref{fig: cost}. The mean cost of the stochastic trajectories remains close to the cost of the optimal solution of the deterministic problem.

\subsection{Unicycle}
Consider the nonlinear kinematic unicycle model
\begin{equation} \label{eq:unicycle dynamics}
    \dX_t = 
    \begin{bmatrix}
        v_t \cos(\theta) \\
        v_t \sin(\theta) \\
        \omega_t + d_t
    \end{bmatrix} \dt + g_t \, \dW_t
\end{equation}
where $X_t = \begin{bmatrix}
    p_{x,t} & p_{y,t} & \theta_t
\end{bmatrix}^\top$ is the state vector, consisting of the unicycle’s position and heading angle. The control inputs are the unicycle's velocity $v_t$ and angular velocity $\omega_t$. $g_t \dW_t$ is the stochastic disturbance with $W_t$ a three-demensional Wiener process. $g_t$ is set to $0.04\mathbf{I}_3$. The goal of the stochastic trajectory optimization is to drive the unicycle from the start point $[0 \quad 0]^\top$ to the end point $[2 \quad 2]^\top$ in $T=3$ while avoiding the obstacles with probability $1-10^{-3}$, as shown in Figure~\ref{fig: unicycle}.

\begin{figure}
    \centering
    \includegraphics[width =0.49\linewidth]{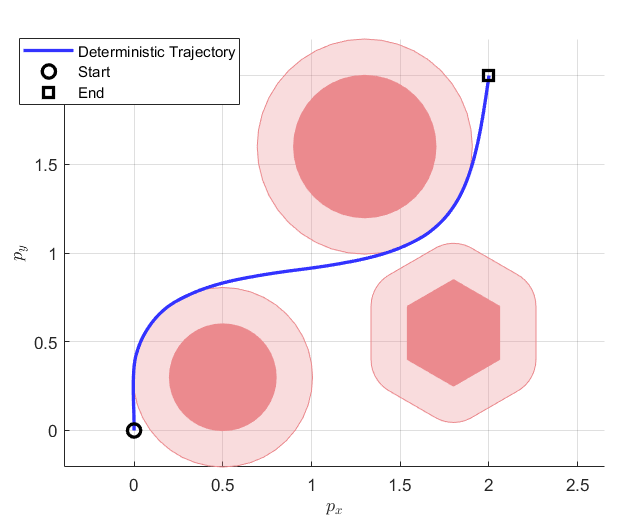}
    \includegraphics[width =0.49\linewidth]{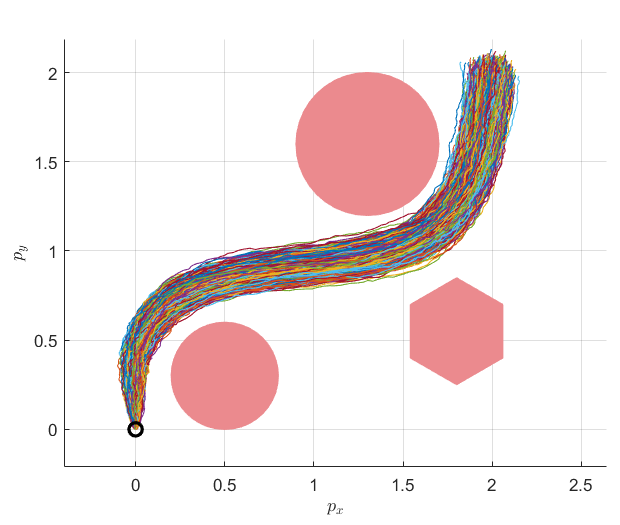}\\
    \caption{Trajectory optimization of the unicycle system~\eqref{eq:unicycle dynamics} with $1-10^{-3}$ guarantee. \textbf{Left:} The solution of the deterministic trajectory optimization problem. The solid red shapes represent the obstacles. The corresponding set erosion is represented as the transparent red areas. \textbf{Right:} Each curve is an independent trajectory of the stochastic system.}
    \label{fig: unicycle}
\end{figure}

We construct the deterministic trajectory optimization problem \eqref{eq: opt trj scheme} with $\delta = 10^{-3}$, $\varepsilon=0.9$ and $\Delta t=0.01$. We consider the associated deterministic system and add a feedback tracking controller 
$
    v_t = v^*_t + K_x\big(\cos\theta_t (p^*_{x,t} - p_{x,t}) + \sin\theta (p^*_{y,t} - p_{y,t})\big),
    \omega_t = \omega^*_t + K_y\big(-\sin\theta_t (p^*_{x,t} - p_{x,t}) + \cos\theta (p^*_{y,t} - p_{y,t}) \big)
                + K_\theta(\theta^*_t-\theta_t)
$ with $K_x=K_y=0.5, K_{\theta}=0.8$
, which forms a contractive closed-loop system. $(v^*_t, \omega^*_t)$ is the feedforward control, $(p^*_{x,t}, p^*_{y,t}, \theta^*_t)$ is the reference trajectory. The cost function and the safety constraint are implemented in the same form as in \ref{sec:double integrator}.

To validate our method, we simulate $10^4$ stochastic trajectories, all of which remain collision-free, as shown in Figure~\ref{fig: unicycle}. The cost function of the optimal solution and the stochastic trajectories are visualized in the right panel of Figure~\ref{fig: cost}. As shown, all simulated trajectories
remain collision-free, and The mean cost of the stochastic trajectories remains close to the cost of the deterministic trajectory.

\begin{figure}
    \centering
    \includegraphics[width =0.49\linewidth]{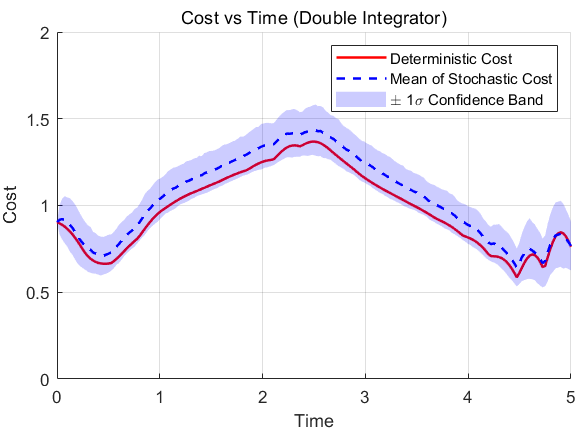}
    \includegraphics[width =0.49\linewidth]{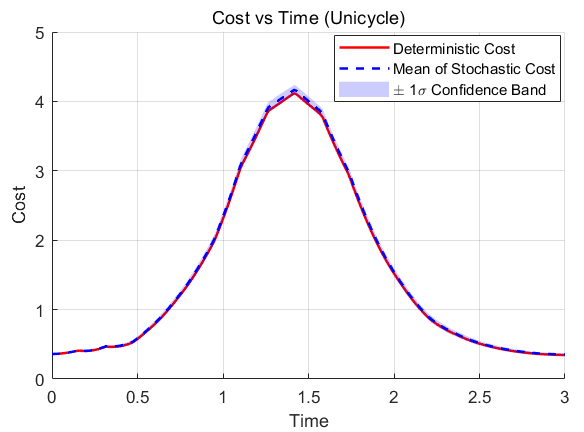}\\
    \caption{Cost functions for both cases. The red curve shows the cost of the optimal deterministic trajectory over time, while the blue dashed line indicates the mean cost of the stochastic trajectories. \textbf{Left:} Double integrator. \textbf{Right:} Unicycle.}
    \label{fig: cost}
\end{figure}

\section{Conclusion} \label{sec: conclusion}
In this paper, we investigated the stochastic trajectory optimization problem under chance constraints for continuous-time systems. Based on the set-erosion strategy, we developed a deterministic trajectory optimization framework in which the chance constraint is converted to a deterministic constraint imposed on an eroded subset of the safe set. The control input obtained by this framework is proved to be a feasible solution of the original problem, and its performance was quantified in several cases. Compared to stochastic schemes, our framework is tractable with a variety of deterministic methods. Two numerical experiments are conducted to validate our method.

\bibliographystyle{ieeetr}
\bibliography{main}    

\end{document}